\theoremstyle{plain}
\newtheorem{theorem}{Theorem}
\newtheorem{lemma}{Lemma}
\newtheorem{corollary}{Corollary}
\newtheorem{proposition}{Proposition}
\theoremstyle{definition}
\newtheorem{definition}{Definition}
\newtheorem{notation}{Notasjon}
\theoremstyle{remark}
\newtheorem{result}{Result}
\newtheorem{remark}{Remark}
\newtheorem{observation}{Observation}
\numberwithin{equation}{section}
\newcommand{\dd}{\mathrm{\,d}}
\DeclareMathOperator{\diag}{diag}
\DeclareMathOperator{\di}{div}
\begin{document}
\title[]{Superposition in the p-Laplace equation.}
\author{Karl K. Brustad}
\date{January 18, 2016}

\begin{abstract}
That a superposition of fundamental solutions to the \(p\)-Laplace Equation is \(p\)-superharmonic -- even in the non-linear cases \(p>2\) -- has been known since M. Crandall and J. Zhang published their paper \textit{Another Way to Say Harmonic} in 2003. We give a simple proof and extend the result by means of an explicit formula for the \(p\)-Laplacian of the superposition.
\end{abstract}
\maketitle

\section{Introduction}

%

Our object is a superposition of fundamental solutions for the \(p\)-Laplace Equation
\begin{equation}
\Delta_p u := \di \left(|\nabla u|^{p-2}\nabla u\right) = 0.
\label{int.eq}
\end{equation}
Although the equation is non-linear, the function
\[V(x) = \int_{\mathbb{R}^n}\frac{\rho(y)}{|x-y|^\frac{n-p}{p-1}}\dd y,\qquad \rho\geq 0,\quad 2\leq p < n\]
is a supersolution in \(\mathbb{R}^n\), i.e. \(\Delta_p V\leq 0\) in the sense of distributions. It is a so-called \(p\)-superharmonic function -- see Definition \ref{def.psup} on page \pageref{def.psup} -- according to which it has to obey the comparison principle.
The case \(p=2\) reduces to the Laplace Equation \(\Delta u = 0\) with the Newtonian potential
\[V(x) = \int_{\mathbb{R}^n}\frac{\rho(y)}{|x-y|^{n-2}}\dd y,\]
which is a superharmonic function.

M. Crandall and J. Zhang discovered in \cite{crandall2003} that the sum
\[\sum_{i=1}^N\frac{a_i}{|x-y_i|^\frac{n-p}{p-1}},\qquad a_i>0\]
of fundamental solutions is a \(p\)-superharmonic function. Their proof was written in terms of viscosity supersolutions. A different proof was given in \cite{lindqvist2008}. The purpose of our note is a \emph{simple} proof of the following theorem:

\newpage
\begin{theorem}\label{int.mainthm}
Let \(2\leq p < n\). For an arbitrary concave function \(K\),
\begin{equation}
W(x) := \sum_{i=1}^\infty \frac{a_i}{|x-y_i|^\frac{n-p}{p-1}} + K(x),\qquad y_i\in\mathbb{R}^n,\, a_i \geq 0, 
\label{int.main}
\end{equation}
is \(p\)-superharmonic in \(\mathbb{R}^n\), provided the series converges at some point.
\end{theorem}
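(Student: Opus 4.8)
The plan is to reduce the claim to a local, pointwise statement about finitely many fundamental solutions plus a smooth concave term, and then argue that $p$-superharmonicity is preserved under the two limiting operations (adding infinitely many bumps, and approximating a general concave $K$ by smooth ones). First I would recall that the individual fundamental solution $\Phi_y(x)=|x-y|^{-(n-p)/(p-1)}$ is $p$-harmonic away from $y$ and $p$-superharmonic on all of $\mathbb{R}^n$; this is classical and presumably established just before the theorem. The substantive content is that a \emph{sum} of such solutions stays $p$-superharmonic even though $\Delta_p$ is nonlinear, and the natural route is an explicit computation of $\Delta_p$ applied to $u=\sum_{i=1}^N a_i\Phi_{y_i}+K$ on the open set where all the $\Phi_{y_i}$ and $K$ are smooth and $\nabla u\neq 0$. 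Writing $\Delta_p u = |\nabla u|^{p-2}\bigl(\Delta u + (p-2)|\nabla u|^{-2}\langle D^2u\,\nabla u,\nabla u\rangle\bigr)$, the goal is to show the bracketed quantity is $\le 0$; the abstract promises "an explicit formula for the $p$-Laplacian of the superposition", so I expect the core lemma to be an identity expressing this bracket as a manifestly non-positive combination of the gradients $\nabla\Phi_{y_i}$ and the Hessian of $K$.

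For the key algebraic step I would set $v_i=\nabla\Phi_{y_i}$ and exploit two facts about a single fundamental solution: each $\Phi_{y_i}$ is $p$-harmonic, i.e. $\Delta\Phi_{y_i}+(p-2)|v_i|^{-2}\langle D^2\Phi_{y_i}v_i,v_i\rangle=0$ on its punctured domain, and moreover $v_i/|v_i|$ is a \emph{radial unit vector} pointing away from $y_i$, with $D^2\Phi_{y_i}$ having an explicit eigenstructure (one eigenvalue in the radial direction, an $(n-1)$-fold eigenvalue in the tangential directions, of known signs). Summing, $\Delta u=\sum a_i\Delta\Phi_{y_i}+\Delta K$ and $D^2u=\sum a_i D^2\Phi_{y_i}+D^2K$, and the cross terms in $\langle D^2u\,\nabla u,\nabla u\rangle$ are controlled because the quadratic form $\xi\mapsto \langle D^2\Phi_{y_i}\xi,\xi\rangle/|\nabla u|^2 + \text{(correction)}$ behaves well. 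Concretely, I expect the identity to take the shape
\[
\Delta_p u \;=\; -\,|\nabla u|^{p-4}\!\sum_{i<j} a_i a_j\, c_{ij}\;+\;|\nabla u|^{p-2}\Bigl(\Delta K + (p-2)\tfrac{\langle D^2K\nabla u,\nabla u\rangle}{|\nabla u|^2}\Bigr),
\]
with $c_{ij}\ge 0$ (for $p\ge 2$) built from $|v_i||v_j|-\langle v_i,v_j\rangle\ge 0$ and the positivity of the tangential curvature of each $\Phi$, while the $K$-term is $\le 0$ because $K$ concave gives $D^2K\le 0$ and $p-2\ge 0$. This pointwise inequality, valid on the open dense set where $\nabla u\ne 0$ and everything is smooth, is the heart of the matter; I'd then upgrade it to $\Delta_p u\le 0$ in the distributional/viscosity sense on the full domain by a standard argument handling the critical set $\{\nabla u=0\}$ (e.g. noting $u$ is continuous and that a $C^2$ function satisfying $\Delta_p u\le 0$ wherever $\nabla u\ne0$ is $p$-superharmonic, or using the $\epsilon$-regularization $|\nabla u|^2+\epsilon$).

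The remaining two steps are the limiting passages. For the infinite sum: given that $\sum a_i\Phi_{y_i}(x_0)<\infty$ at one point, by Harnack-type or monotonicity estimates the partial sums $W_N=\sum_{i=1}^N a_i\Phi_{y_i}+K$ increase and converge locally; each $W_N$ is $p$-superharmonic by the finite case, and an increasing limit of $p$-superharmonic functions, if locally bounded (or finite at one point, hence by the comparison/Harnack structure locally integrable), is again $p$-superharmonic — I would cite the relevant property from the theory of $p$-superharmonic functions referenced via Definition \ref{def.psup}. For a general (not necessarily smooth) concave $K$: approximate $K$ from above by a decreasing sequence of smooth concave functions $K_\delta$ (mollification preserves concavity and $K_\delta\downarrow K$); each $\sum a_i\Phi_{y_i}+K_\delta$ is $p$-superharmonic by the above, and a decreasing limit of $p$-superharmonic functions whose limit is finite somewhere and lower semicontinuous is $p$-superharmonic. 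The main obstacle is the algebraic identity in the second paragraph: getting the cross terms $c_{ij}$ into a transparently non-negative closed form, and in particular verifying that the interaction between distinct fundamental solutions together with the concave term never produces a positive contribution to $\Delta_p u$ — this is exactly where the nonlinearity bites and where the specific exponent $(n-p)/(p-1)$ and the restriction $p\ge 2$ must both be used.
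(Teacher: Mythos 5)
Your overall architecture (finite sum plus smooth concave term, then let \(N\to\infty\), then approximate a general concave \(K\)) matches the paper's, and citing the standard fact that an increasing limit of \(p\)-superharmonic functions which is finite somewhere is \(p\)-superharmonic is a legitimate substitute for the paper's hands-on Dini argument (Lemma \ref{dini}). But the heart of the theorem — the pointwise inequality for a finite sum — is not proved in your proposal; it is conjectured, and conjectured in a form that is not the one that actually comes out. The paper's computation (Lemma \ref{sup.thm}) is: by linearity of \(\mathcal{H}\) and \(\Delta\),
\[\Delta_p V = |\nabla V|^{p-2}\sum_{i} a_i\Big((p-2)\tfrac{\nabla V(\mathcal{H}w_i)\nabla V^T}{|\nabla V|^2} + \Delta w_i\Big),\]
and the Rayleigh quotient of each \emph{radial} Hessian \(\mathcal{H}w_i\), evaluated in the direction of the \emph{full} gradient \(\nabla V\), equals \(v_i''\cos^2\theta_i + \tfrac{v_i'}{|x-y_i|}\sin^2\theta_i\) by \eqref{obs.ray}; writing \(\cos^2\theta_i = 1-\sin^2\theta_i\), the radial ODE \eqref{fund.lap} annihilates \((p-1)v_i'' + (n-1)\tfrac{v_i'}{|x-y_i|}\), leaving the single sum \eqref{sup.sign}, in which every term carries the manifestly nonnegative weight \(\sin^2\theta_i\), \(\theta_i\) being the angle between \(x-y_i\) and \(\nabla V\). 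There are no \(a_ia_j\) cross terms: the nonlinearity is absorbed entirely into the angles \(\theta_i\). Your proposed identity \(-|\nabla u|^{p-4}\sum_{i<j}a_ia_jc_{ij}+(\text{concave term})\) with pair coefficients \(c_{ij}\ge0\) built from \(|v_i||v_j|-\langle v_i,v_j\rangle\) does not have this structure (expanding \(\sin^2\theta_i\) produces, for each \(i\), a full quadratic form in all the other coefficients, not fixed pairwise weights), and you neither derive it nor prove the sign of its coefficients; since this is exactly where, as you say, "the nonlinearity bites", the proposal has a genuine gap at its central step.

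Two further points need attention even granting the key inequality. First, at the poles \(y_i\) the finite sum is neither \(C^2\) nor bounded, and you cannot dispose of them by citing that each fundamental solution is \(p\)-superharmonic across its pole — sums of \(p\)-superharmonic functions are in general not \(p\)-superharmonic, which is the whole point of the theorem; your "standard argument" only addresses the critical set \(\{\nabla u = 0\}\). The paper verifies Definition \ref{def.psup} directly, isolating the poles by small balls on which \(W_N^\delta\) exceeds \(\max_{\overline{D}}h\) before applying comparison on the complement. Second, mollifying a concave \(K\) with a symmetric kernel gives \(K_\delta\le K\) by Jensen's inequality, so you do not obtain a decreasing sequence \(K_\delta\downarrow K\); this is harmless, because monotonicity is not needed — locally uniform convergence together with comparison against \(h-\epsilon\), as in the paper's final step, suffices — but as written that limiting step is incorrectly justified.
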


Through Riemann sums one can also include potentials like
\[\int_{\mathbb{R}^n}\frac{\rho(y)}{|x-y_i|^\frac{n-p}{p-1}}\dd y + K(x),\qquad \rho\geq 0.\]

Similar results are given for the cases \(p=n\) and \(p>n\) and, so far as we know, the extra concave term \(K(x)\) is a new feature. The key aspect of the proof is the explicit formula \eqref{sup.sign} for the \(p\)-Laplacian of the superposition. Although the formula is easily obtained, it seems to have escaped attention up until now.

Finally, we mention that in \cite{garofalo2010} the superposition of fundamental solutions has been extended to the \(p\)-Laplace Equation in the Heisenberg group. (Here one of the variables is discriminated.) In passing, we show in Section \ref{ep} that similar results are \emph{not} valid for the evolutionary equations
\[\frac{\partial}{\partial t}u = \Delta_p u\qquad\text{and}\qquad \frac{\partial}{\partial t}(|u|^{p-2}u) = \Delta_p u\]
where \(u = u(x,t)\).
We are able to bypass a lenghty calculation in our counter examples.

%
%
%
%
%

\section{The fundamental solution}
Consider a radial function, say
\[f(x) = v(|x|)\]
where we assume that \(v\in C^2(0,\infty)\). By differentiation
\begin{align}\label{rad.formulas}
\nabla f &= \frac{v'}{|x|}x^T, & |\nabla f| &= |v'|,\\ \notag
\mathcal{H}f &= v''\frac{xx^T}{|x|^2} + \frac{v'}{|x|}\left(I - \frac{xx^T}{|x|^2}\right), & \Delta f &= v'' + (n-1)\frac{v'}{|x|},
\end{align}
when \(x\neq0\).

The Rayleigh quotient formed by the Hessian matrix \(\mathcal{H}f = \left[\frac{\partial^2f}{\partial x_i\partial x_j}\right]\) above will play a central role. Notice that for any non-zero \(z\in\mathbb{R}^n\), we have that
\[\frac{z^T}{|z|}\frac{xx^T}{|x|^2}\frac{z}{|z|} = \cos^2\theta\]
where \(\theta\) is the angle between the two vectors \(x\) and \(z\). This yields the expedient formula
\begin{equation}
\frac{z^T(\mathcal{H}f) z}{|z|^2} = v''\cos^2\theta + \frac{v'}{|x|}\sin^2\theta,\qquad x,z\neq 0.
\label{obs.ray}
\end{equation}

Since the gradient of a radial function is parallel to \(x\), the Rayleigh quotient in the identity
\begin{equation}
\di \left(|\nabla f|^{p-2}\nabla f\right) = |\nabla f|^{p-2}\left((p-2)\frac{\nabla f (\mathcal{H}f) \nabla f^T}{|\nabla f|^2} + \Delta f\right)
\label{int.id}
\end{equation}
reduces to \(v''\). The vanishing of the whole expression is then equivalent to
\begin{equation}
(p-1)v'' + (n-1)\frac{v'}{|x|} = 0
\label{fund.lap}
\end{equation}
which, integrated once, implies that a radially decreasing solution \(w\) is on the form
\begin{equation}
w(x) = v(|x|)\qquad \text{where}\qquad v'(|x|) = -c|x|^\frac{1-n}{p-1}.
\label{fund.sol}
\end{equation}
The constant \(c = c_{n,p}>0\) can now be chosen so that
\[\Delta_p w + \delta = 0\]
in the sense of distributions. Thus
\begin{equation}
w(x) =
\begin{cases}
-c_{n,p}\frac{p-1}{p-n}|x|^\frac{p-n}{p-1}, &\text{when } p\neq n,\\
-c_{n,n}\ln|x|, &\text{when } p = n
\end{cases}
\label{fund.sol2}
\end{equation}
is the \textbf{fundamental solution} to the \(p\)-Laplace Equation \eqref{int.eq}.


\section{Superposition of fundamental solutions}\label{sup}

We now form a superposition of translates of the fundamental solution and compute its \(p\)-Laplacian. To avoid convergence issues all sums are, for the moment, assumed finite.

\begin{lemma}\label{sup.thm}
Let \(w\) be the fundamental solution to the \(p\)-Laplace equation. Define the function \(V\) as
\begin{equation}
V(x) := \sum_{i=1}^N a_i w(x-y_i),\qquad a_i>0,\;y_i\in\mathbb{R}^n.
\label{sup.lincomb}
\end{equation}
Then, in any dimension and for any \(p\neq1\)
\footnote{When \(p=1\) there are no non-constant radial solutions of \eqref{int.eq}. Instead we get the zero mean curvature equation in which a solution's level sets are minimal surfaces.},
\(\Delta_pV\) is of the same sign wherever it is defined in \(\mathbb{R}^n\). Furthermore, the dependence of the sign on \(p\) and \(n\) is as indicated in figure \ref{supfig}.
\end{lemma}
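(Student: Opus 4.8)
The plan is to feed the superposition straight into the pointwise identity \eqref{int.id} and to use the defining ODE \eqref{fund.lap} twice — once to evaluate the Laplacian term and once to evaluate the Rayleigh quotient of the Hessian — so that the two contributions combine into a single expression whose sign is visible at a glance. Write $r_i := |x-y_i|$ and $w_i(x) := w(x-y_i) = v(r_i)$, so that $V = \sum_{i=1}^N a_i w_i$, and away from the poles $y_i$ the gradient, Laplacian and Hessian of $V$ are obtained by summing those of the $w_i$.

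First I would compute $\Delta w_i = v''(r_i) + (n-1)v'(r_i)/r_i$ from \eqref{rad.formulas} and substitute $v''(r_i) = -\tfrac{n-1}{p-1}v'(r_i)/r_i$ from \eqref{fund.lap}, which collapses it to $\Delta w_i = (n-1)\tfrac{p-2}{p-1}\,v'(r_i)/r_i$, hence $\Delta V = (n-1)\tfrac{p-2}{p-1}\sum_i a_i v'(r_i)/r_i$. Next, at any $x\notin\{y_1,\dots,y_N\}$ with $\nabla V(x)\neq\bfo$, let $\theta_i$ be the angle between $x-y_i$ and $\nabla V(x)$; applying \eqref{obs.ray} to each $w_i$ with $z=\nabla V(x)^T$ and again eliminating $v''(r_i)$ gives $\nabla V(\mathcal Hw_i)\nabla V^T/|\nabla V|^2 = \tfrac{v'(r_i)}{r_i}\bigl(\sin^2\theta_i - \tfrac{n-1}{p-1}\cos^2\theta_i\bigr)$. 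Inserting both expressions into \eqref{int.id}, summing, and factoring the common $(p-2)\,v'(r_i)/r_i$ out of each summand, the remaining bracket simplifies via $1+\tfrac{n-1}{p-1} = \tfrac{n+p-2}{p-1}$ to $\tfrac{n+p-2}{p-1}\sin^2\theta_i$, yielding
\begin{equation}
\Delta_p V(x) = (p-2)\,\frac{n+p-2}{p-1}\,|\nabla V(x)|^{p-2}\sum_{i=1}^N a_i\,\frac{v'(r_i)}{r_i}\,\sin^2\theta_i.
\label{sup.sign}
\end{equation}
Since $v'(r) = -c_{n,p}\,r^{(1-n)/(p-1)}$ with $c_{n,p}>0$, each term $a_i v'(r_i)\sin^2\theta_i/r_i$ is $\leq 0$, so with $a_i>0$ and $|\nabla V|^{p-2}>0$ the sign of $\Delta_p V$ equals the sign of the rational function $-(p-2)\tfrac{n+p-2}{p-1}$, which is constant once $n$ and $p$ are fixed; reading that sign off the $(n,p)$-plane produces the chart of figure \ref{supfig} (in particular $\Delta_p V\le 0$ when $n\ge 2,\ p\ge 2$, with equality everywhere only at $p=2$).

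The computation itself is routine — the only point demanding attention is the \emph{domain} where \eqref{sup.sign} is asserted: it requires $x\neq y_i$ and $\nabla V(x)\neq\bfo$ (so that the angles $\theta_i$ and, when $p<2$, the factor $|\nabla V|^{p-2}$ are meaningful), which is exactly the set on which $\Delta_p V$ is classically defined; what happens on the exceptional set is postponed to the distributional statement of Theorem \ref{int.mainthm}. I would also flag that $n+p-2$ may vanish or change sign when $n=1$ (more generally near $p=2-n$), which is why the chart is cut by the lines $p=1$, $p=2$ and $p=2-n$ rather than being a single region.
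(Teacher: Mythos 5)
Your argument is correct and follows essentially the same route as the paper: expand $\Delta_p V$ via the identity \eqref{int.id} and linearity of the Hessian and Laplacian, evaluate each term with the Rayleigh-quotient formula \eqref{obs.ray}, and use the ODE \eqref{fund.lap} to eliminate $v''$, arriving at a formula identical to the paper's \eqref{sup.sign} once $v'(r_i)/r_i=-c_{n,p}\,r_i^{(2-n-p)/(p-1)}$ is inserted. The only cosmetic difference is that you substitute $v''=-\tfrac{n-1}{p-1}v'/r$ into the Laplacian and the Rayleigh quotient separately, whereas the paper cancels $(p-1)v''+(n-1)v'/r$ in one stroke; the sign chart (including the equality lines $p=2$, $n=1$ via $\sin\theta_i=0$, and $p+n=2$) then reads off the same constant factor.
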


\begin{figure}[ht]
\includegraphics{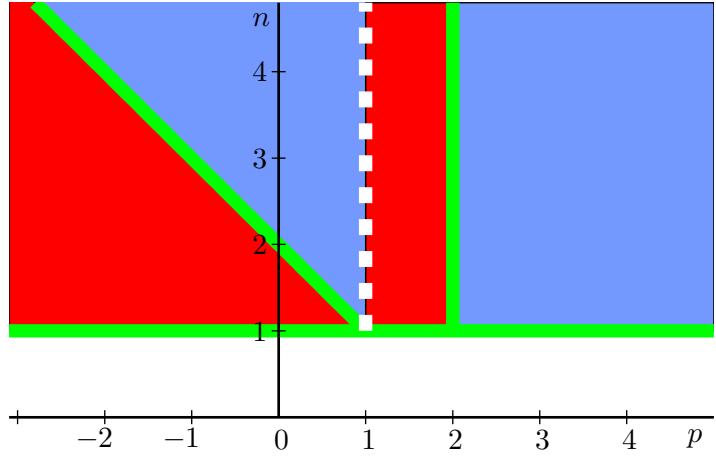}%
\caption{\textcolor{blue}{\(\Delta_p V\leq0\)}, \textcolor{green}{\(\Delta_p V=0\)}, \textcolor{red}{\(\Delta_p V\geq0\)}}%
\label{supfig}%
\end{figure}

\begin{proof}
We simplify the notation by letting \(w_i\) and \(v_i\) denote that the functions \(w\) and \(v\) are to be evaluated at \(x-y_i\) and \(|x-y_i|\), respectively.

First, the linearity of the Hessian and the Laplacian enable us to write
\begin{align*}
\Delta_p V &= |\nabla V|^{p-2}\left((p-2)\frac{\nabla V(\mathcal{H} V)\nabla^T V}{|\nabla V|^2} + \Delta V\right)\\
             &= |\nabla V|^{p-2}\sum_{i=1}^N a_i\left((p-2)\frac{\nabla V(\mathcal{H}w_i)\nabla^T V}{|\nabla V|^2} + \Delta w_i\right).\\
\intertext{Secondly, by \eqref{rad.formulas} and \eqref{obs.ray} this is}
						 &= |\nabla V|^{p-2}\sum_{i=1}^N a_i \left((p-2)\Big(v_i''\cos^2\theta_i + \frac{v_i'}{|x-y_i|}\sin^2\theta_i\Big)\right.\\
						 & \qquad\qquad\qquad\qquad{}+ \left.v_i'' + (n-1)\frac{v_i'}{|x-y_i|}\right)\\
						 &= |\nabla V|^{p-2}\sum_{i=1}^N a_i\left((p-2)\Big(\frac{v_i'}{|x-y_i|} - v_i''\Big)\sin^2\theta_i\right.\\
						 & \qquad\qquad\qquad\qquad{}+ \left.(p-1)v_i'' + (n-1)\frac{v_i'}{|x-y_i|}\right)\\
\intertext{where \(\theta_i\) is the angle between \(x-y_i\) and \(\nabla V(x)\). And finally, as \(w\) is a fundamental solution, the last two terms disappear by \eqref{fund.lap}. We get }
\Delta_p V	 &= (p-2)|\nabla V|^{p-2}\sum_{i=1}^N a_i\left(\frac{v_i'}{|x-y_i|} - v_i''\right)\sin^2\theta_i.
\end{align*}

It only remains to use the formula \eqref{fund.sol} for \(v'_i\) to compute that
\[\frac{v_i'}{|x-y_i|} - v_i'' = -c_{n,p}\frac{p+n-2}{p-1}|x-y_i|^\frac{2-n-p}{p-1}\]
and the sign of \(\Delta_p V\) can easily be read off the final identity
\begin{equation}
\Delta_p V(x) = -c_{n,p}\tfrac{(p-2)(p+n-2)}{p-1}|\nabla V|^{p-2}\sum_{i=1}^N a_i\frac{\sin^2\theta_i}{|x-y_i|^\frac{p+n-2}{p-1}}.
\label{sup.sign}
\end{equation}

\end{proof}

\begin{remark}
The three green lines in figure \ref{supfig} deserve some attention. The line \(p=2\) is obvious since the equation becomes linear. So is the line \(n=1\) as the ``angle'' between two numbers is 0 or \(\pi\). The little surprise, perhaps, is the case \(p+n = 2\). Then the terms in \( V\) will be on the form \(a_i|x-y_i|^2\) and it all reduces to the rather unexciting explanation that a linear combination of quadratics is again a quadratic.
\end{remark}

\section{Adding more terms}\label{add}
We will now examine what will happen to the sign of the \(p\)-Laplace operator when an extra term, \(K(x)\), is added to the linear combination \eqref{sup.lincomb}. We will from now on only consider \(p>2\). Restricted to this case, the factor \(C_{n,p} := c_{n,p}\frac{(p-2)(p+n-2)}{p-1}\) in \eqref{sup.sign} stays positive.

Let \( V\) be as in Lemma \ref{sup.thm} and let \(K\in C^2\). For efficient notation, write \(\xi = \xi(x) := \nabla V(x) + \nabla K(x)\). Then
\begin{align*}
\Delta_p( V + K) &= |\xi|^{p-2}\left((p-2)\frac{\xi\mathcal{H}( V + K)\xi^T}{|\xi|^2} + \Delta(V + K)\right)\\
                   &= |\xi|^{p-2}\left((p-2)\frac{\xi(\mathcal{H} V)\xi^T}{|\xi|^2} + \Delta V\right)\\
									 &\quad{}+ |\xi|^{p-2}\left((p-2)\frac{\xi(\mathcal{H}K)\xi^T}{|\xi|^2} + \Delta K\right).
\end{align*}
Now, the second to last term equals
\[-C_{n,p}|\xi|^{p-2}\sum_ia_i|x-y_i|^\frac{2-n-p}{p-1}\sin^2\alpha_i \leq 0\]
where \(\alpha_i\) is the angle between \(x-y_i\) and \(\nabla V(x) + \nabla K(x)\). Thus it suffices to ensure that the last term also is non-positive in order for the \(p\)-Laplace to hold its sign.
Lemma \ref{add.thm} presents a sufficient condition.

\begin{lemma}\label{add.thm}
Let \(p>2\) and define \(V\) as in \eqref{sup.lincomb}.
Then
\begin{equation}
\Delta_p( V(x) + K(x)) \leq 0
\label{eq:}
\end{equation}
for all concave functions \(K\in C^2(\mathbb{R}^n)\) wherever the left-hand side is defined.
\end{lemma}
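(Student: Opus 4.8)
The decomposition displayed just above the lemma reduces the claim to showing that the ``$K$-part''
\[
(p-2)\frac{\xi(\mathcal{H}K)\xi^T}{|\xi|^2} + \Delta K
\]
is non-positive at every point where $V+K$ is twice differentiable and $\xi = \nabla V + \nabla K \neq \bfo$. Indeed, the ``$V$-part'' has already been identified with $-C_{n,p}|\xi|^{p-2}\sum_i a_i|x-y_i|^{\frac{2-n-p}{p-1}}\sin^2\alpha_i \leq 0$; here the only thing worth stressing is that the Rayleigh-quotient formula \eqref{obs.ray} holds for \emph{any} non-zero direction $z$, so the chain of equalities in the proof of Lemma \ref{sup.thm} goes through verbatim with $\xi$ substituted for $\nabla V$ and $\alpha_i$ for $\theta_i$, the terms involving $(p-1)v_i''+(n-1)v_i'/|x-y_i|$ again cancelling by \eqref{fund.lap}.

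The plan is then simply to exploit that a function $K\in C^2(\mathbb{R}^n)$ is concave precisely when its Hessian $\mathcal{H}K(x)$ is negative semidefinite at every $x$. Two consequences are immediate. First, the Rayleigh quotient satisfies $\xi(\mathcal{H}K)\xi^T/|\xi|^2 \leq 0$ for every $\xi\neq\bfo$. Secondly, $\Delta K = \tr(\mathcal{H}K)$ is a sum of eigenvalues of $\mathcal{H}K$, all non-positive, hence $\Delta K\leq 0$ as well. Since $p>2$, the coefficient $p-2$ is positive, so both summands of the $K$-part are non-positive, and therefore so is their sum. Multiplying by $|\xi|^{p-2}\geq 0$ and adding the already non-positive $V$-part yields $\Delta_p(V+K)\leq 0$.

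I do not expect a genuine obstacle here: once the algebraic splitting that precedes the lemma is granted, the statement is a one-line consequence of the Hessian characterisation of concavity. The only point requiring a little care is the domain of validity — the expression $\Delta_p(V+K)$ is only meaningful away from the singularities $y_i$ of $V$ and, in the quotient form we use, at points where $\xi\neq\bfo$ — but this is exactly the qualification ``wherever the left-hand side is defined'' in the statement, so nothing further is needed. One may also observe that the argument uses concavity of $K$ solely through the negative semidefiniteness of $\mathcal{H}K$, which is what makes it robust enough to survive the passage to Riemann sums and hence to the integral potentials mentioned in the introduction.
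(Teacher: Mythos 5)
Your proposal is correct and follows essentially the same route as the paper: the displayed splitting of \(\Delta_p(V+K)\) into a \(V\)-part (handled by the Rayleigh-quotient computation of Lemma \ref{sup.thm} with \(\xi\) in place of \(\nabla V\)) and a \(K\)-part, which is non-positive because the Hessian of a concave \(C^2\) function is negative semidefinite, so both the quotient \(\xi(\mathcal{H}K)\xi^T/|\xi|^2\) and \(\Delta K\) are \(\leq 0\) and \(p-2>0\). Nothing is missing; your remark that the Rayleigh-quotient identity \eqref{obs.ray} holds for an arbitrary non-zero direction is exactly the point the paper uses implicitly when replacing \(\theta_i\) by \(\alpha_i\).
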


\begin{proof}
\(z^T(\mathcal{H}K)z \leq 0\) for all \(z\in\mathbb{R}^n\) since the Hessian matrix of a concave function \(K\) is negative semi-definite. Also \(K\) is superharmonic since the eigenvalues of \(\mathcal{H}K\) are all non-positive, i.e. \(\Delta K \leq 0\). Therefore,
\[\Delta_p( V(x) + K(x)) \leq |\xi|^{p-2}\left((p-2)\frac{\xi(\mathcal{H}K)\xi^T}{|\xi|^2} + \Delta K\right) \leq 0.\] 
\end{proof}

\begin{remark}
Though \(K\in C^2\) being concave is sufficient, it is not necessary. A counter example is provided by the quadratic form
\[K(x) = \frac{1}{2}x^TAx, \qquad\text{where } A=\diag(1-m,1,\dots,1),\; m=p+n-2.\]
Then \(K\) is not concave, but a calculation will confirm that \((p-2)\frac{\xi(\mathcal{H}K)\xi^T}{|\xi|^2} + \Delta K \leq 0\) and hence \(\Delta_p(V+K)\leq 0\). In fact, a stronger result than Lemma \ref{add.thm} is possible: Let \(f_i\) be \(C^2\) at \(x\) for \(i=1,\dots,N\) and let
\[\lambda_1^i \leq \lambda_2^i \leq \cdots \leq \lambda_n^i\]
be the eigenvalues of the Hessian matrix \(\mathcal{H}f_i(x)\). If
\[\lambda_1^i + \cdots + \lambda_{n-1}^i + (p-1)\lambda_n^i \leq 0\qquad \forall\, i,\]
then \(\Delta_p\left(\sum_i f_i\right) \leq 0\) at \(x\).
\end{remark}

\section{\(p\)-superharmonicity}\label{psup}

We now prove that
\[W(x) :=\sum_{i=1}^\infty a_i w(x-y_i) + K(x),\qquad a_i \geq 0,\, y_i\in\mathbb{R}^n,\quad K \text{ concave}\]
is a \(p\)-superharmonic function in \(\mathbb{R}^n\).
The three cases \(2<p<n\), \(p=n\) and \(p>n\) are different and an additional assumption, \eqref{sup.assumption}, seems to be needed when \(p\geq n\). In the first case, only convergence at one point is assumed.  We start with the relevant definitions and a useful Dini-type lemma.

\begin{definition}
Let \(\Omega\) be a domain in \(\mathbb{R}^n\). A continuous function \(h\in W^{1,p}_{loc}(\Omega)\) is \textbf{\(p\)-harmonic} if
\begin{equation}
\int|\nabla h|^{p-2}\nabla h\nabla\phi^T\dd x = 0
\label{psup.pharmdef}
\end{equation}
for each \(\phi\in C_0^\infty(\Omega)\).
\end{definition}



\begin{definition}\label{def.psup}
A function \( u \colon \Omega \rightarrow (-\infty,\infty]\) is \textbf{\(p\)-superharmonic} in \(\Omega\) if
\begin{enumerate}[i)]
	\item \( u \not\equiv \infty\).
	\item \( u\) is lower semi-continuous in \(\Omega\).
	\item If \(D\subset\subset\Omega\) and \(h\in C(\overline{D})\) is \(p\)-harmonic in \(D\) with \(h\big|_{\partial D}\leq  u\big|_{\partial D}\), then \(h\leq u\) in \(D\).
\end{enumerate}
\end{definition}
Furthermore, if \(u\in C^2(\Omega)\), it is a standard result that \(u\) is \(p\)-harmonic if and only if \(\Delta_p u = 0\) and \(u\) is \(p\)-superharmonic if and only if \(\Delta_p u\leq0\).

Also, a function \(u\) in \(C(\mathbb{R}^n)\cap W^{1,p}_{loc}(\mathbb{R}^n)\) is \(p\)-superharmonic if
\begin{equation}
\int_{\mathbb{R}^n}|\nabla u|^{p-2}\nabla u \nabla \phi^T\dd x \geq 0
\label{psup.inteq}
\end{equation}
for all \(0\leq \phi\in C^\infty_0(\mathbb{R}^n)\). See \cite{Lindqvist1986}.

\begin{lemma}\label{dini}
Let \((f_N)\) be an increasing sequence of lower semi-continuous (l.s.c.) functions defined on a compact set \(C\) converging point-wise to a function \(f\geq 0\). Then, given any \(\epsilon > 0\) there is an \(N_\epsilon\in\mathbb{N}\) such that
\[f_N(x) > - \epsilon\]
for all \(x\in C\) and all \(N\geq N_\epsilon\).
\end{lemma}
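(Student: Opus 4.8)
The plan is to argue by contradiction using the compactness of $C$ together with the lower semi-continuity of each $f_N$ and of the pointwise limit $f$. First I would fix $\epsilon>0$ and suppose the conclusion fails; then for every $k\in\mathbb{N}$ there is an index $N_k\geq k$ and a point $x_k\in C$ with $f_{N_k}(x_k)\leq -\epsilon$. By compactness of $C$, I would pass to a subsequence so that $x_k\to x_0\in C$. The goal is then to derive a contradiction by showing $f(x_0)\leq -\epsilon<0$, which is impossible since $f\geq 0$.

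The key step is to combine monotonicity and lower semi-continuity in the right order. Fix any $M\in\mathbb{N}$. For all $k$ large enough we have $N_k\geq M$, so by monotonicity of the sequence $(f_N)$, $f_M(x_k)\leq f_{N_k}(x_k)\leq -\epsilon$. Now let $k\to\infty$: since $f_M$ is l.s.c. and $x_k\to x_0$, we get
\[
f_M(x_0)\leq \liminf_{k\to\infty} f_M(x_k)\leq -\epsilon.
\]
This holds for every $M$, so letting $M\to\infty$ and using pointwise convergence $f_M(x_0)\to f(x_0)$ yields $f(x_0)\leq -\epsilon$, contradicting $f\geq 0$. Hence some $N_\epsilon$ works.

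I expect the only subtle point to be keeping the quantifiers straight — in particular noticing that although $x_k$ depends on $k$, the inequality $f_M(x_k)\leq -\epsilon$ is valid for \emph{all} sufficiently large $k$ once $M$ is fixed, which is exactly what is needed to invoke lower semi-continuity of $f_M$ at the single limit point $x_0$. No uniform convergence of $(f_N)$ is available or needed; the l.s.c.\ hypothesis on the limit $f$ is not actually required for this one-sided statement, only $f\geq 0$. If one prefers a non-contradiction argument, an alternative is to cover $C$ by the open sets $U_N:=\{x : f_N(x)>-\epsilon\}$ (open by l.s.c.), note they increase and cover $C$ because $f(x)\geq 0>-\epsilon$ at every point, extract a finite subcover by compactness, and take $N_\epsilon$ to be the largest index appearing; monotonicity then gives $f_N(x)>-\epsilon$ on all of $C$ for $N\geq N_\epsilon$. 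I would likely present this second version as it is cleaner.
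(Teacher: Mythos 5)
Both of your arguments are correct. The paper itself omits the proof as ``standard,'' and your second version --- covering $C$ by the open sets $U_N=\{x: f_N(x)>-\epsilon\}$, which are open by lower semi-continuity, increasing by monotonicity, and cover $C$ because $f\geq 0$, then extracting a finite subcover --- is precisely that standard Dini-type argument, so presenting it is the right call. Your contradiction argument is also sound (the order of operations --- fix $M$, use monotonicity for large $k$, then l.s.c.\ of $f_M$ at $x_0$, then let $M\to\infty$ --- is handled correctly), and your observation that lower semi-continuity of the limit $f$ is not needed is accurate.
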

The standard proof is omitted.

In the following, \(K\) is any concave function in \(\mathbb{R}^n\). We let \(K_\delta,\;\delta>0\) denote the smooth convolution \(\phi_\delta * K\) with some mollifier \(\phi_\delta\). One can show that
\(K_\delta\) is concave and
\[K_\delta \to K\]
locally uniformly on \(\mathbb{R}^n\) as \(\delta\to 0^+\).

\subsection{The case 2\(<\)p\(<\)n}

Let \(\delta>0\). If \(y_i\in\mathbb{R}^n\) and \(a_i>0\), the function
\[W^\delta_N(x) := \sum_{i=1}^N\frac{a_i}{|x-y_i|^\frac{n-p}{p-1}} + K_\delta(x)\]
is \(p\)-superharmonic except possibly at the poles \(y_i\) (Lemma \ref{add.thm}).
Defining \(W_N^\delta(y_i) := \infty\), \textbf{we claim that \(W^\delta_N\) is \(p\)-superharmonic in the whole \(\mathbb{R}^n\).}

We have to verify Def. \ref{def.psup}. Clearly, i) and ii) are valid. For the comparison principle in iii) we select \(D\subset\subset\mathbb{R}^n\) (i.e. \(D\) is bounded) and let \(h\in C(\overline{D})\) be \(p\)-harmonic in \(D\) with \(h\big|_{\partial D}\leq  W^\delta_N\big|_{\partial D}\).
If any, isolate the points \(y_i\) in \(\overline{D}\) with \(\epsilon\)-balls \(B_i := B(y_i,\epsilon)\) where \(\epsilon>0\) is so small so that \(W^\delta_N\big|_{B_i} \geq \max_{\overline{D}}h\).
This is possible because \(h\) is bounded and because \(\lim_{x\to y_i} W^\delta_N(x) = \infty\). Then \(W^\delta_N\) is \(C^2\) on \(D\setminus \cup B_i\) so, by Lemma ~\ref{add.thm}, \(\Delta_p W_N\leq0\) on this set. Also, \(h\big|_{\partial(D\setminus\cup B_i)} \leq  W^\delta_N\big|_{\partial(D\setminus\cup B_i)}\) by the construction of the \(\epsilon\)-balls, so \(h\leq W^\delta_N\) on this set since \( W^\delta_N\) is \(p\)-superharmonic there. Naturally, \(h\leq W^\delta_N\) on \(\cup B_i\), so the inequality will hold in the whole domain \(D\). This proves the claim.

Now \(N\to\infty\). Assume that the limit function
\[W^\delta(x) := \sum_{i=1}^\infty\frac{a_i}{|x-y_i|^\frac{n-p}{p-1}} + K_\delta(x)\]
is finite at least at one point in \(\mathbb{R}^n\). \textbf{We claim that \(W^\delta\) is \(p\)-superharmonic.}

By assumption \( W^\delta\not\equiv\infty\) and it is a standard result that the limit of an increasing sequence of l.s.c functions is l.s.c.

Part iii). Suppose that \(D\subset\subset\mathbb{R}^n\) and \(h\in C(\overline{D})\) is \(p\)-harmonic in \(D\) with \(h\big|_{\partial D}\leq  W^\delta\big|_{\partial D}\). Then \((W^\delta_N - h)\) is an increasing sequence of l.s.c. functions on the compact set \(\partial D\) with point-wise limit \((W^\delta-h)\big|_{\partial D}\geq 0\). If \(\epsilon>0\), then \((W^\delta_N - h)\big|_{\partial D} > - \epsilon\) for a sufficiently big \(N\) by Lemma \ref{dini}. That is
\[(h-\epsilon)\big|_{\partial D} < W^\delta_N\big|_{\partial D}\]
so \((h-\epsilon)\big|_{D} \leq W^\delta_N\big|_{D}\) since \(h-\epsilon\) is \(p\)-harmonic and \(W^\delta_N\) is \(p\)-superharmonic. Finally, since \(W^\delta_N\leq W^\delta\) we get
\[(h-\epsilon)\big|_D \leq W^\delta\big|_D\]
and as \(\epsilon\) was arbitrary, the required inequality \(h\leq W^\delta\) in \(D\) is obtained and the claim is proved.

Let \(\delta\to 0\) and set
\[W(x) := \sum_{i=1}^\infty\frac{a_i}{|x-y_i|^\frac{n-p}{p-1}} + K(x).\]
\textbf{We claim that \(W\) is \(p\)-superharmonic.}

Part i) and ii) are immediate. For part iii), assume \(D\subset\subset\mathbb{R}^n\) and \(h\in C(\overline{D})\) is \(p\)-harmonic in \(D\) with \(h\big|_{\partial D}\leq  W\big|_{\partial D}\). Let \(\epsilon>0\). Then there is a \(\delta>0\) such that
\[|K(x)-K_\delta(x)| < \epsilon\]
at every \(x\in\overline{D}\). We have
\[W^\delta = W + K_\delta - K > W - \epsilon \geq h - \epsilon\]
on \(\partial D\).
And again, since \(h-\epsilon\) is \(p\)-harmonic and \(W^\delta\) is \(p\)-superharmonic, we get \(W^\delta \geq h-\epsilon\) in \(D\). Thus
\[W\big|_D \geq W^\delta\big|_D-\epsilon \geq h\big|_D - 2\epsilon.\] 
This proves the claim, settles the case \(2<p<n\) and completes the proof of Theorem \ref{int.mainthm}.
\bigskip

We now turn to the situation \(p\geq n\) and introduce the assumption
\begin{equation}
A := \sum_{i=1}^\infty a_i<\infty.
\label{sup.assumption}
\end{equation}

\subsection{The case p=n}

Let \(\delta>0\). The partial sums
\[W^\delta_N(x) := -\sum_{i=1}^N a_i\ln|x-y_i| + K_\delta(x)\]
are \(p\)-superharmonic in \(\mathbb{R}^n\) by the same argument as in the case \(2<p<n\).

Let \(N\to\infty\). 
\textbf{We claim that}
\[W^\delta(x) := -\sum_{i=1}^\infty a_i\ln|x-y_i| + K_\delta(x)\]
\textbf{is \(p\)-superharmonic in \(\mathbb{R}^n\)} provided the sum converges absolutely\footnote{Conditional convergence is not sufficient. A counter example is \(a_i = ~1/i^2\), \(|y_i| = ~\exp((-1)^i i)\), yielding \(W^\delta(x) = -\infty\) for all \(y_i\neq x\neq0\). } at least at one point.

Assume for the moment that, given a radius \(R>0\), it is possible to find numbers \(C_i\) so that
\begin{equation}
\begin{gathered}
\ln|x-y_i| \leq C_i \text{ for all } x \in B_R := B(0,R), \text {and}\\
\text{the series } \sum_{i=1}^\infty a_i C_i =: S_R \text{ converges.}
\end{gathered}
\label{sup.c}
\end{equation}

Define the sequence \((f_N)\) in \(B_R\) by
\[f_N(x) := \sum_{i=1}^N\big(-a_i\ln|x-y_i| + a_iC_i\big) + K_\delta(x),\qquad f(x) := \lim_{N\to\infty}f_N(x).\]
Then \((f_N)\) is an increasing sequence of l.s.c functions implying that \(f\) is l.s.c. in \(B_R\) and that
\[W^\delta = f - S_R\]
is as well. Since \(R\) can be arbitrarily big, we conclude that \(W^\delta\) does not take the value \(-\infty\) and is l.s.c. in \(\mathbb{R}^n\).

For part iii) we show that \(f\) obeys the comparison principle. Assume \(D\subset\subset B_R\) and \(h\in C(\overline{D})\) is \(p\)-harmonic in \(D\) with \(h\big|_{\partial D}\leq  f\big|_{\partial D}\).
Then \((f_N - h)\) is an increasing sequence of l.s.c. functions on the compact set \(\partial D\) with point-wise limit
\[(f-h)\big|_{\partial D} \geq 0.\]
If \(\epsilon>0\), then \((f_N - h)\big|_{\partial D} > - \epsilon\) for a sufficiently big \(N\) by Lemma \ref{dini}. That is
\[(h-\epsilon)\big|_{\partial D} < f_N\big|_{\partial D}\]
so \((h-\epsilon)\big|_{D} \leq f_N\big|_{D}\) since \(h-\epsilon\) is \(p\)-harmonic and \(f_N\) is \(p\)-superharmonic. Finally, since \(f_N\leq f\) we get
\[(h-\epsilon)\big|_D \leq f\big|_D \]
and as \(\epsilon\) was arbitrary, the required inequality \(h\leq f\) in \(D\) is obtained. Hence \(W^\delta(x) = f(x) - S_R\) is a \(p\)-superharmonic function in any ball \(B_R\).

The claim is now proved if we can establish the existence of the numbers \(C_i\) satisfying \eqref{sup.c}.
By a change of variables we may assume that the convergence is at the origin. That is
\[L := \sum_{i=1}^\infty a_i|\ln|y_i|| < \infty.\]
We have
\begin{align*}
\ln|x-y_i| &\leq \ln(|x| + |y_i|)\\
           &\leq \ln(2\max\{|x|,|y_i|\})\\
					 &= \max\{\ln|x|,\ln|y_i|\} + \ln 2,
\end{align*}
so 
\[C_i := \max\{\ln R,\ln|y_i|\} + \ln 2\]
will do since (for \(R>1/2\)) the sequence of partial sums \(\sum_{i=1}^N a_i C_i\) is increasing and bounded by \(A\ln 2R + L\).

The final limit \(\delta\to 0\) causes no extra problems.
\[W(x) := -\sum_{i=1}^\infty a_i\ln|x-y_i| + K(x)\]
is \(p\)-superharmonic in \(\mathbb{R}^n\).

This settles the case \(p=n\).

\subsection{The case p\(>\)n}

Let \(\delta>0\). Consider again the partial sums
\[W^\delta_N(x) := -\sum_{i=1}^N a_i|x-y_i|^\frac{p-n}{p-1} + K_\delta(x).\]

As before \textbf{\(W^\delta_N\) is \(p\)-superharmonic in \(\mathbb{R}^n\)}, but now a different approach is required for the proof.
For ease of notation, write
\[u(x) := -\sum_{i=1}^N a_i|x-y_i|^\alpha + K(x),\qquad 0<\alpha := \frac{p-n}{p-1}<1,\]
where \(K\in C^\infty(\mathbb{R}^n)\) is concave.
We will show that \(u\) satisfies the integral inequality \eqref{psup.inteq}.

Clearly, \(u\) is continuous and \(\int_\Omega |u|^p\dd x < \infty\) on any bounded domain \(\Omega\). Also,
\[|\nabla (|x|^\alpha)|^p = \left|\alpha\frac{x^T}{|x|^{2-\alpha}}\right|^p \propto \frac{1}{|x|^{(1-\alpha)p}}\]
where one can show that
\[(1-\alpha)p < n.\]
Thus \(\int|\nabla u|^p\dd x < \infty\) locally so \(u \in C(\mathbb{R}^n)\cap W^{1,p}_{loc}(\mathbb{R}^n)\).

Let \(0\leq \phi\in C^\infty_0(\mathbb{R}^n)\) and write
\begin{align*}
\int_{\mathbb{R}^n}|\nabla u|^{p-2}\nabla u \nabla \phi^T\dd x &= \left(\int_{\mathbb{R}^n\setminus\cup_j B_j} + \int_{\cup_j B_j}\right)|\nabla u|^{p-2}\nabla u \nabla \phi^T\dd x\\
	&=: I_\epsilon + J_\epsilon
\end{align*}
where \(B_j := B(y_j,\epsilon)\) and where \(\epsilon>0\) is so small so that the balls are disjoint. Obviously, \(J_\epsilon\to 0\) as \(\epsilon\to 0\) but
\begin{align*}
I_\epsilon &= \int_{\partial(\mathbb{R}^n\setminus\cup_j B_j)}\phi |\nabla u|^{p-2}\nabla u\, \nu\dd\sigma - \int_{\mathbb{R}^n\setminus\cup_j B_j} \phi\Delta_p u\dd x\\
	&\geq \int_{\cup_j\partial B_j}\phi |\nabla u|^{p-2}\nabla u\, \nu\dd\sigma
\end{align*}
since \(\Delta_p u\leq 0\) on \(\mathbb{R}^n\setminus\cup_j B_j\) by Lemma \ref{add.thm}. Here, \(\nu\) is a sphere's \emph{inward} pointing normal so, for \(x\in\partial B_i\),
\begin{align*}
\nabla u(x)\nu &= \nabla u(x)\frac{y_i-x}{\epsilon}\\
  &= \left(-\alpha\sum_{j=1}^N a_j\frac{(x-y_j)^T}{|x-y_j|^{2-\alpha}} + \nabla K(x)\right)\frac{y_i-x}{\epsilon}\\
	&= \frac{\alpha a_i}{\epsilon^{1-\alpha}} + \alpha \sum_{j\neq i} a_j\frac{(x-y_j)^T}{|x-y_j|^{2-\alpha}}\frac{x-y_i}{\epsilon} + \nabla K(x)\frac{y_i-x}{\epsilon}\\
	&> \frac{\alpha a_i}{\epsilon^{1-\alpha}} - \frac{\alpha}{(d_i/2)^{1-\alpha}}\sum_{j\neq i} a_j - C_K,\qquad d_i := \min_{j\neq i}|y_j-y_i|\\
	&> 0
\end{align*}
for \(\epsilon\) sufficiently small.

That is,
\[\int_{\mathbb{R}^n}|\nabla u|^{p-2}\nabla u \nabla \phi^T\dd x \geq 0\]
for all non-negative test-functions. The partial sums are therefore \(p\)-superharmonic functions.

%

Let \(N\to \infty\) and set
\[W^\delta(x) := -\sum_{i=1}^\infty a_i|x-y_i|^\alpha + K_\delta(x), \qquad \alpha := \frac{p-n}{p-1}\]
remembering the assumption \eqref{sup.assumption}.
This function is automatically \emph{upper} semi-continuous but as the definition of \(p\)-superharmonicity requires \emph{lower} semi-continuity, \emph{continuity} has to be shown.

\textbf{We claim that \(W^\delta\) is \(p\)-superharmonic in ~\(\mathbb{R}^n\)} provided the series converges at least at some point.

Again we may assume that the convergence is at the origin. That is \(\sum_{i=1}^\infty a_i|y_i|^\alpha  < \infty\). Since \(0<\alpha<1\), we get
\begin{align*}
|x-y_i|^\alpha &\leq (|x| + |y_i|)^\alpha\\
               &\leq |x|^\alpha + |y_i|^\alpha
\end{align*}
so since
\[\sum_{i=1}^\infty a_i|x-y_i|^\alpha \leq |x|^\alpha \sum_{i=1}^\infty a_i + \sum_{i=1}^\infty a_i|y_i|^\alpha < \infty\]
we see that \(W^\delta_N\to W^\delta\) locally uniformly in \(\mathbb{R}^n\). We infer that \(W^\delta\) is continuous in \(\mathbb{R}^n\).

For part iii), assume \(D\subset\subset\mathbb{R}^n\) and \(h\in C(\overline{D})\) is \(p\)-harmonic in \(D\) with \(h\big|_{\partial D}\leq  W^\delta\big|_{\partial D}\). Since \(W^\delta\leq W^\delta_N\) and \(W^\delta_N\) is \(p\)-superharmonic we get \(h\big|_D\leq  W^\delta_N\big|_D\) for all \(N\). So given any \(\epsilon>0\)
\[h\big|_D\leq  W^\delta\big|_D + \epsilon\]
by uniformity on the bounded set \(D\). This proves the claim.

Next, let \(\delta\to 0\). \textbf{Then}
\[W(x) := -\sum_{i=1}^\infty a_i|x-y_i|^\frac{p-n}{p-1} + K(x)\]
\textbf{is \(p\)-superharmonic in \(\mathbb{R}^n\)}
by the same argument as when \(2<p<n\). This settles the case \(p>n\).

\section{Epilogue: Evolutionary superposition.}\label{ep}

The superposition of fundamental solutions has been extended to \(p\)-Laplace equations in the Heisenberg group, see \cite{garofalo2010}. When it comes to further extensions, a natural question is whether such a superposition is valid for the evolutionary \(p\)-Laplace equation
\begin{align}
u_t &= \Delta_p u,\\
\intertext{or for the homogeneous equation}
\frac{\partial}{\partial t}(|u|^{p-2}u) &= \Delta_p u.
\end{align}
The following shows it does not.

In both cases \(p>2\) and \(u = u(x,t)\) where \(x\in\mathbb{R}^n\) and \(t>0\). The fundamental solutions to these equations
are given by
\[\mathcal{B}(x,t) := \frac{1}{t^{n\beta}}\left(C - \frac{p-2}{p}\beta^\frac{1}{p-1}\left(\frac{|x|}{t^\beta}\right)^\frac{p}{p-1}\right)_+^\frac{p-1}{p-2},\qquad \beta := \frac{1}{n(p-2) + p}\]
and
\[\mathcal{W}(x,t) := \frac{c}{t^{\frac{n}{p(p-1)}}}\exp\left(-\frac{p-1}{p}(1/p)^\frac{1}{p-1}\left(\frac{|x|}{t^{1/p}}\right)^\frac{p}{p-1}\right)\]
respectively, where the subscript \(+\) in the so-called Barenblatt solution \(\mathcal{B}(x,t)\) means \((\cdot)_+ = \max\{\cdot,0\}\). The \(C\) and \(c\) are positive constants chosen so that the solutions satisfy certain conservation properties.
For any fixed positive time the functions are \(C^2\) away from the origin and, in the case of \(\mathcal{B}\), away from the boudary of its support. We also notice that \(\mathcal{W}>0\) on \(\mathbb{R}^n\times(0,\infty)\) while \(\mathcal{B}\geq 0\) has compact support for any finite \(t\).

In some ways these functions are similar to the heat kernel. In particular, one can show that for any fixed \(0\neq y\in\mathbb{R}^n\) \emph{there is a time when the time derivatives} \(\mathcal{W}_t(y,t)\) and \(\mathcal{B}_t(y,t)\) change sign. In fact, a calculation will confirm that
\[\Delta_p(a\mathcal{B}) - (a\mathcal{B})_t = (a^{p-1} - a)\mathcal{B}_t,\qquad 0<a\neq 1\]
changes sign at \(y\) when
\[|y| = (Cpn)^\frac{p-1}{p}\beta^\frac{p-2}{p}t^\beta\]
showing that not even the simple superposition \(\mathcal{B} + \mathcal{B}\) holds.
This counter example arises due to \(\mathcal{B}\) not being multiplicative and will not work when applied to \(\mathcal{W}\).

Although the \(p\)-Laplacian
\[\Delta_p u = |\nabla u|^{p-2}\left((p-2)\frac{\nabla u(\mathcal{H} u)\nabla u^T}{|\nabla u|^2} + \Delta u\right),\qquad p>2,\]
is not well defined at \(x_0\) if \(\nabla u(x_0)=0\), it can be continuously extended to zero if \(u\) is \(C^2\) at the critical point. We will thus write \(\Delta_p u(x_0)=0\) in those cases.

Fix a non-zero \(y\in\mathbb{R}^n\) and define the linear combination \(V\) as
\begin{equation}
V(x,t) := \mathcal{W}(x+y,t) + \mathcal{W}(x-y,t).
\label{comb}
\end{equation}
Since \(\mathcal{W}(x,t) =: f(|x|,t)\) is radial in \(x\), the gradient can be written as
\[\nabla \mathcal{W}(x,t) = f_1(|x|,t)\frac{x^T}{|x|}\]
and
\[V(0,t) = \mathcal{W}(y,t) + \mathcal{W}(-y,t) = 2\mathcal{W}(y,t).\] 
Thus \(V\) is \(C^2\) at the origin and
\[\nabla V(0,t) = \bigg|_{x=0} f_1(|x+y|,t)\frac{(x+y)^T}{|x+y|} + f_1(|x-y|,t)\frac{(x-y)^T}{|x-y|} = 0\]
for all \(t>0\). So, at \(x=0\) we get
\begin{align*}
\frac{\partial}{\partial t}\left(|V|^{p-2}V\right) - \Delta_p V
	&= (p-1)V^{p-2}V_t - 0\\
	&= 2(p-1)(2\mathcal{W}(y,t))^{p-2}\mathcal{W}_t(y,t)
\end{align*}
which has the aforementioned change of sign at some time \(t\).
Thus the sum of the two fundamental solutions \(\mathcal{W}(x\pm y,t)\) cannot be a supersolution nor a subsolution.

\bibliographystyle{alpha}
\bibliography{C:/Users/Karl_K/Documents/PhD/references}

\end{document}